\newtheorem{dfn}{Definition}[section]
\newtheorem{thm}[dfn]{Theorem}
\newtheorem{prop}[dfn]{Proposition}
\newtheorem{remark}[dfn]{Remark}
\newtheorem{example}[dfn]{Example}
\numberwithin{equation}{section}
\title[Local H\"older stabilities for inverse problems]{Local H\"older stabilities for inverse problems of first-order hyperbolic equations}
\author{Giuseppe Floridia}
\address{Università Mediterranea di Reggio Calabria,
Department PAU,
Via dell'Università 25  
89124 Reggio Calabria, Italy}
\email{floridia.giuseppe@icloud.com}
\author{Hiroshi Takase}
\address{Institute of Mathematics for Industry, Kyushu University, 744 Motooka, Nishi-ku, Fukuoka 819-0395, Japan}
\email{htakase@imi.kyushu-u.ac.jp}
\date{July 21, 2022.}
\keywords{Inverse problems, first-order hyperbolic equations, Carleman estimates, integral curves, characteristic curves}
\subjclass[2020]{35R30, 35R25, 35L04, 35F16, 35Q49}
\begin{document}
\begin{abstract}
In this paper, we consider a Cauchy problem for a first-order hyperbolic equation with time-dependent coefficients. Cauchy data are given on a lateral subboundary and we obtain local H\"older stabilities for inverse source and coefficient problems via a Carleman estimate.
\end{abstract}
\maketitle

\section{Introduction and main result}

Let $d\in\mathbb{N}$, $\Omega\subset\mathbb{R}^d$ be a bounded domain with Lipschitz boundary $\partial\Omega$, $T>0$, and $Q:=\Omega\times(0,T)$. We consider the first-order partial differential operator $P$ such that
\[Pu:=A^0(x,t)\partial_tu+A(x,t)\cdot\nabla u,\]
where $A^0\in C^1(\overline{Q})\cap L^\infty(\Omega\times(0,\infty))$ is a positive function, and $A=(A^1,\cdots,A^d)^\mathsf{T}\in C^2(\overline{Q};\mathbb{R}^d)$ is a vector-valued function on $\overline{Q}$. Set
\[\Sigma_+:=\{(x,t)\in\partial\Omega\times(0,T)\mid A(x,t)\cdot\nu(x)>0\},\]
where $\nu$ denotes the outer unit normal to $\partial\Omega$, and $\Sigma_-:=(\Sigma_+)^c=(\partial\Omega\times(0,T))\setminus\Sigma_+$.

For references regarding inverse problems and controllability for first-order hyperbolic equations, see G\"olgeleyen and Yamamoto \cite{Golgeleyen2016}, Cannarsa, Floridia, and Yamamoto \cite{Cannarsa2019}, Floridia and Takase \cite{Floridia2021} and \cite{Floridia}, and the references therein. In particular, Floridia and Takase \cite{Floridia2021} introduced the dissipativeness for vector-valued functions (see Definition 2.4 in \cite{Floridia2021}) and proved global Lipschitz stabilities by observation on $\Sigma_+$ for inverse problems concerning the operator $P$ imposing boundary conditions on $\Sigma_-$. In this paper, without the assumption of such extra boundary conditions on $\Sigma_-$, we obtain local H\"older stabilities for inverse source and coefficient problems. Although a large number of studies have been made on inverse problems for first-order equations, what seems to be lacking is analysis for equations with coefficients depending on both space and time. Indeed, there are few results regarding inverse problems for this kind of equations until \cite{Floridia2021}. In this paper, we investigate inverse source and coefficient problems in a weaker setting than \cite{Floridia2021} in that we do not impose extra boundary conditions on $\Sigma_-$.

Regarding local H\"older stabilities for inverse source and coefficient problems for second-order hyperbolic equations with time-dependent coefficients, readers are referred to Jiang, Liu, and Yamamoto \cite{Liu2017}, Yu, Liu, and Yamamoto \cite{Yu2018}, Bellassoued and Yamamoto \cite{Yamamoto2017}, Klibanov and Li \cite{Klibanov2021}, and Takase \cite{Takase2020}.

\subsection*{Preliminaries}

In the successive two subsections, we present the main results of this paper, that is, inverse source and inverse coefficient problems respectively. Before stating their formulations, we give some notations needed to describe them. In \cite{Floridia2021}, the authors introduced the following definition of dissipativieness for vector-valued functions (see Definition \ref{dissipative}).

\begin{dfn}
For a vector-valued function $X\in C^2(\overline{\Omega};\mathbb{R}^d)$ and $x\in\overline{\Omega}$, a $C^2$ curve $c:[-\eta_1,\eta_2]\to \overline{\Omega}$ for some $\eta_1\ge 0$ and $\eta_2\ge 0$ with $\eta_1+\eta_2>0$ is called an integral curve of $X$ through $x$ if it solves the following initial problem for ordinary differential equations
\[\begin{cases}\displaystyle c'(\sigma):=\frac{dc}{d\sigma}(\sigma)=X(c(\sigma)),\quad \sigma\in[-\eta_1,\eta_2],\\ c(0)=x.\end{cases}\]
\end{dfn}

\begin{dfn}
Let $a,b\in\mathbb{R}$ with $a<b$. An integral curve $c:[a,b]\to\overline{\Omega}$ is called maximal if it cannot be extended to a segment $[a-\varepsilon_1,b+\varepsilon_2]$ for some $\varepsilon_1\ge 0$ and $\varepsilon_2\ge 0$ with $\varepsilon_1+\varepsilon_2>0$ in $\overline{\Omega}$.
\end{dfn}

\begin{dfn}\label{dissipative}
A vector-valued function $X\in C^2(\overline{\Omega};\mathbb{R}^d)$ is called dissipative if, for every $x\in\overline{\Omega}$, the maximal integral curve $c_x$ of $X$ through $x$ is defined on a finite segment $[\sigma_-(x),\sigma_+(x)]$ and $\sigma_-$ can be defined as $\sigma_-\in C(\overline{\Omega})\cap H^2(\Omega)$.
\end{dfn}

Throughout this paper, we assume
\begin{equation}\label{positivity}\exists\rho>0\ \text{s.t.}\ \min_{(x,t)\in\overline{Q}}|A(x,t)|\ge\rho,\end{equation}
\begin{equation}\label{finiteness}A(\cdot,0)\ \text{is dissipative},\end{equation}
and
\begin{equation}\label{spd}\exists C>0\ \text{s.t.}\ \forall\xi\in\mathbb{R}^d,\ \forall(x,t)\in\overline{Q},\quad |\partial_tA(x,t)\cdot\xi|\le C|A(x,t)\cdot\xi|.\end{equation}

\begin{remark}
\eqref{positivity} and \eqref{spd} imply that there exists $\phi\in C^1(\overline{Q})$ such that $A$ can be represented by
\[A(x,t)=A(x,0)e^{\int_0^t \phi(x,s)ds},\quad (x,t)\in\overline{Q}.\]
For the proof, see Proposition 2.10 in \cite{Floridia2021}.
\end{remark}

Owing to \eqref{finiteness}, we can give the following notations. For a fixed $x\in\overline{\Omega}$, let $c_x:[\sigma_-(x),\sigma_+(x)]\to\overline{\Omega}$ be the maximal integral curve of $A(\cdot,0)$, which implies that $c_x$ satisfies
\[\begin{cases}c_x'(\sigma)=A(c_x(\sigma),0),\quad \sigma\in[\sigma_-(x),\sigma_+(x)],\\ c_x(0)=x.\end{cases}\]
Thus, we can define the functions
\begin{equation}\label{distance}\varphi_0(x):=\int_{\sigma_-(x)}^0|c_x'(\sigma)|d\sigma\end{equation}
and
\begin{equation}\label{weight}\varphi(x,t):=\varphi_0(x)-\beta t,\quad (x,t)\in \overline{Q}\end{equation}
for a parameter $\beta>0$ (see also (2.3) and (3.4) in \cite{Floridia2021}). For $\varepsilon\ge 0$, we define
\[\Omega_\varepsilon:=\{x\in\Omega\mid\varphi_0(x)>\varepsilon\}\]
and
\[Q_\varepsilon:=\{(x,t)\in Q\mid \varphi(x,t)>\varepsilon\},\]
where $\varphi\in C(\overline{\Omega})\cap H^2(\Omega)$ (see Lemma 3.2 in \cite{Floridia2021}).

\subsection{Inverse source problem}
For given $g\in H^1(0,T;H^\frac{1}{2}(\partial\Omega))$ and a given open subset $\Sigma\subset \Sigma_+$, we consider the Cauchy problem:
\begin{equation}\label{boundary}\begin{cases}Pu+p(x,t)u=R(x,t)f(x)\quad&\text{in}\ Q,\\
u=g\quad&\text{on}\ \Sigma,\end{cases}\end{equation}
where
\begin{equation}\label{given}p\in W^{1,\infty}(0,T;L^\infty(\Omega)),\ R\in H^1(0,T;L^\infty(\Omega)),\  \text{and}\ f\in L^2(\Omega).\end{equation}
Given $A^0$, $A$, $p$, and $R$, we consider an inverse source problem to determine the source term $f$ in a local domain near $\Sigma$, from lateral boundary data on $\Sigma$ and initial data, and show local H\"older stability. The following theorem is one of the main results of this paper.
\begin{thm}\label{ISP}
Let $g\in H^1(0,T;H^\frac{1}{2}(\partial\Omega))$, $A^0\in C^1(\overline{Q})\cap L^\infty(\Omega\times(0,\infty))$ satisfying $\displaystyle\min_{(x,t)\in\overline{Q}}A^0(x,t)>0$, and $A\in C^2(\overline{Q};\mathbb{R}^d)$ satisfying \eqref{positivity}, \eqref{finiteness}, and \eqref{spd}. Assume \eqref{given} and
\begin{equation}\label{R}\exists m_0>0\ \text{s.t.}\ |R(x,0)|\ge m_0\quad a.e.\ x\in\Omega.\end{equation}
Let $\Sigma\subset\Sigma_+$ be an open subset satisfying
\begin{equation}\label{geometry}\exists\varepsilon_*\ge 0\ \text{s.t.}\ \varnothing\neq Q_{\varepsilon_*}\cap\partial Q\subset \Sigma\cup \big(\Omega\times\{0\}\big).\end{equation}
Then, there exist $\varepsilon^*>0$ such that for any $\varepsilon\in(\varepsilon_*,\varepsilon^*)$, there exist constants $C>0$ and $\theta\in(0,1)$ independent of $f$ such that
\[\|f\|_{L^2(\Omega_\varepsilon)}\le C\Big(\mathcal{D}+\mathcal{F}^{1-\theta}\mathcal{D}^\theta\Big),\]
for all $\displaystyle u\in \bigcap_{k=1}^2H^{k}(0,T;H^{2-k}(\Omega))$ satisfying the Cauchy problem \eqref{boundary}, where
\[\mathcal{F}:=\|f\|_{L^2(\Omega)}+\|u\|_{H^1(0,T;L^2(\Omega))},\quad \mathcal{D}:=\|u(\cdot,0)\|_{H^1(\Omega_{\varepsilon_*})}+\sum_{k=0}^1\|\partial_t^k g\|_{L^2(\Sigma)}.\]
\end{thm}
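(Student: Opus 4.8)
The plan is to derive the estimate from a Carleman estimate for the first-order operator $P$ with the weight function $e^{2s\varphi}$ built from $\varphi$ in \eqref{weight}, combined with the standard Bukhgeim--Klibanov argument for converting a hyperbolic Carleman estimate into a stability bound for the inverse source problem. First I would recall (from \cite{Floridia2021}, or reprove in the present weaker boundary setting) that there exist $s_0>0$, $\beta>0$, and $C>0$ such that for all $s\ge s_0$ and all sufficiently regular $w$,
\[
s\int_Q e^{2s\varphi}|w|^2\,dx\,dt
\le C\int_Q e^{2s\varphi}|Pw|^2\,dx\,dt
+ C s\int_{\Sigma_+} e^{2s\varphi}|w|^2\,d\sigma
+ (\text{terms on }\Omega\times\{0\}).
\]
The key structural facts making this work are \eqref{positivity} (so $|A|\ge\rho$, giving the needed transport in the direction of the integral curves), \eqref{finiteness} (so $\varphi_0$ is well-defined, $C(\overline\Omega)\cap H^2(\Omega)$, with $A(\cdot,0)\cdot\nabla\varphi_0=|A(\cdot,0)|$ along curves), and \eqref{spd} (so the time-dependence of $A$ is controlled by $A$ itself and the same weight keeps working for $t>0$ with $\beta$ chosen large relative to the logarithmic derivative $\phi$). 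Crucially, one only picks up a boundary term on $\Sigma_+$ (and on $\{t=0\}$), since on $\Sigma_-$ the sign $A\cdot\nu\le 0$ makes the corresponding boundary contribution have the favorable sign — this is exactly what removes the need for extra boundary data on $\Sigma_-$.

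Next I would set $w:=\partial_t u$ and differentiate the equation $Pu+pu=Rf$ in $t$, obtaining $Pw = -\,(\partial_t A^0)\,\partial_t u - (\partial_t A)\cdot\nabla u - (\partial_t p)u - p\,\partial_t u + (\partial_t R) f + R\,\partial_t f$; but since $f=f(x)$ is time-independent, $\partial_t f=0$, so $Pw = (\partial_t R)f - p\,w - (\partial_t p)u - (\partial_t A^0)\,\partial_t u - (\partial_t A)\cdot\nabla u$. Applying the Carleman estimate to $w$ on $Q_\varepsilon$ (cutting off to ensure the weight is bounded below on the retained region and controlling the cut-off error by $\mathcal F$ via the usual $\varepsilon<\varepsilon^*$ choice so the discarded region carries a strictly smaller weight), the term $\int_Q e^{2s\varphi}|pw|^2$ and $\int_Q e^{2s\varphi}|\partial_t A^0\partial_t u|^2$ are absorbed into the left-hand side for $s$ large, the terms with $\nabla u$ require a preliminary energy-type estimate — here I would use the first-order structure again to bound $\nabla u$ in terms of $\partial_t u$ and $f$ along characteristics, or differentiate in $t$ one more time, which is why the hypothesis $u\in\bigcap_{k=1}^2 H^k(0,T;H^{2-k}(\Omega))$ is imposed. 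What remains is
\[
s\int_Q e^{2s\varphi}|w|^2 \le C\int_Q e^{2s\varphi}|\partial_t R|^2|f|^2 + C\,(\text{data on }\Sigma,\ \Omega\times\{0\}).
\]

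Then comes the Bukhgeim--Klibanov step: evaluate at $t=0$. From the equation at $t=0$, $R(x,0)f(x) = Pu(\cdot,0) + p(\cdot,0)u(\cdot,0) = A^0(x,0)\,\partial_t u(x,0) + A(x,0)\cdot\nabla u(x,0) + p(x,0)u(x,0)$, so by \eqref{R}, $m_0|f(x)| \le |R(x,0)||f(x)| \le |A^0(x,0)||w(x,0)| + |A(x,0)\cdot\nabla u(x,0)| + |p(x,0)||u(x,0)|$, hence $|f(x)|^2 \lesssim |w(x,0)|^2 + |\nabla u(x,0)|^2 + |u(x,0)|^2$ on $\Omega$. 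Writing $|w(x,0)|^2 \le$ (something involving $\int_0^T \partial_t|w|^2\,dt$) $\lesssim \int_0^T |w||\partial_t w|\,dt$ and inserting the weighted bound, one gets for $x\in\Omega_\varepsilon$ (where $\varphi(x,t)\ge\varphi_0(x)-\beta t>\varepsilon-\beta t$), after choosing $T$-portion and $\beta$ so that $e^{2s\varphi(x,0)}$ dominates,
\[
\int_{\Omega_\varepsilon} e^{2s\varphi_0}|f|^2
\le \frac{C}{s}\int_{\Omega_\varepsilon} e^{2s\varphi_0}|f|^2 + C e^{2s\Phi}\,\mathcal D^2 + C\,\mathcal D^2,
\]
with $\Phi:=\sup\varphi$; absorbing the first term for $s\ge s_1$ and using $e^{2s\varphi_0}\ge e^{2s\varepsilon}$ on $\Omega_\varepsilon$ (while $\mathcal F$ bounds the left side crudely by $e^{2s\Phi}\mathcal F^2$), one obtains $\|f\|_{L^2(\Omega_\varepsilon)}^2 \le C e^{2s(\Phi-\varepsilon)}\mathcal D^2$ and trivially $\|f\|_{L^2(\Omega_\varepsilon)}^2 \le C\mathcal F^2$; optimizing over $s\ge s_1$ in the two competing bounds $C e^{Cs}\mathcal D^2$ versus $C\mathcal F^2$ (the classical balancing, valid once $\mathcal D$ is small, otherwise the estimate is trivial from the $\mathcal D$-term) yields $\|f\|_{L^2(\Omega_\varepsilon)} \le C(\mathcal D + \mathcal F^{1-\theta}\mathcal D^\theta)$ for some $\theta\in(0,1)$. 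The main obstacle I anticipate is the $\nabla u(\cdot,0)$ term: controlling the gradient of $u$ (which is not directly an observed quantity and for which there is no initial condition) requires exploiting the first-order transport structure of $P$ together with the geometric condition \eqref{geometry} — integrating along integral curves of $A$ from the observation set $\Sigma$ — so that $\nabla u(\cdot,0)$ on $\Omega_{\varepsilon_*}$ is controlled by $\|u(\cdot,0)\|_{H^1(\Omega_{\varepsilon_*})}$ and the boundary data; making this rigorous in the $H^2$-in-time, $L^2$-in-space energy class, and tracking that the retained subdomain $\Omega_\varepsilon$ with $\varepsilon>\varepsilon_*$ sits strictly inside the domain of influence of $\Sigma$, is the technical heart of the argument.
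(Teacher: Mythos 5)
Your overall strategy (Carleman estimate with weight $e^{2s\varphi}$ applied to $\partial_tu$, Bukhgeim--Klibanov evaluation at $t=0$ via \eqref{R}, then optimization in $s$) matches the paper's, but the step you leave vaguest is exactly where your proposed substitute would not close. You put the $\Omega\times\{0\}$ contribution on the right-hand side of the Carleman estimate with unspecified sign, and then recover $\partial_tu(\cdot,0)$ through the classical bound $e^{2s\varphi(x,0)}|w(x,0)|^2\lesssim\int_0^Te^{2s\varphi}|w|\,|\partial_tw|\,dt+\cdots$ with $w=\partial_tu$. That introduces a weighted norm of $\partial_t^2u$, which is controlled neither by $\mathcal F$ (which only contains $\|u\|_{H^1(0,T;L^2(\Omega))}$) nor by a further application of the Carleman estimate (that would need $\partial_t^2u\in H^1(0,T;L^2)\cap L^2(0,T;H^1)$, beyond the assumed class $\bigcap_{k=1}^2H^k(0,T;H^{2-k}(\Omega))$). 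The paper's Proposition \ref{Carleman} instead already carries $s\int_\Omega e^{2s\varphi(x,0)}|u(x,0)|^2dx$ on the \emph{left} with a favorable sign (this is where $\partial_t\varphi=-\beta<0$ and the condition $u(\cdot,T)=0$, enforced by the cutoff, enter), so the pointwise step $|A^0\partial_tu(x,0)|\ge|R(x,0)f(x)|-|A(x,0)\cdot\nabla u(x,0)|-|p(x,0)u(x,0)|$ is applied directly under the trace integral at no regularity cost.

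The other point you single out as ``the technical heart'' --- controlling $\nabla u(\cdot,0)$ --- is not an obstacle: $\|\nabla u(\cdot,0)\|_{L^2(\Omega_{\varepsilon_*})}$ is simply part of the data, since $\mathcal D$ contains $\|u(\cdot,0)\|_{H^1(\Omega_{\varepsilon_*})}$; no propagation along integral curves from $\Sigma$ is required. The interior gradient term $\partial_tA\cdot\nabla u$ arising in $(P+p)(\chi\partial_tu)$ is removed purely algebraically: by \eqref{spd}, $|\partial_tA\cdot\nabla u|\le C|A\cdot\nabla u|$, and the equation gives $A\cdot\nabla u=Rf-A^0\partial_tu-pu$, so it is absorbed into terms already present (the resulting $|\chi u|^2$ term then needs one more application of the Carleman estimate, to $\chi u$, which your sketch omits). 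Your alternatives --- an energy estimate along characteristics, or differentiating once more in $t$ --- are respectively unnecessary and unavailable at the stated regularity. Finally, the absorption of $\int_0^T\int_{\Omega_{2\varepsilon}}e^{2s\varphi}\big(\sum_k|\partial_t^kR|^2\big)|f|^2$ into the left-hand side is achieved by dominated convergence using $\varphi(x,t)-\varphi_0(x)=-\beta t<0$ for $t>0$; making this explicit is what lets $\mathcal F$ enter only with the small factor $e^{-\varepsilon s}$ after dividing by $e^{3\varepsilon s}$ on $\Omega_{3\varepsilon}$.
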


\begin{example}
Let $d=1$, $\Omega=(0,1)$, and $A(x,t)\equiv 1$. Then, $\Sigma_+=\{1\}\times(0,T)$ and $\varphi(x,t)=x-\beta t$ for $0<\beta<1$. If we set $\Sigma=\Sigma_+$, then \eqref{geometry} holds as it is seen in Figure 1.
\begin{figure}[htbp]
\label{Fig}\centering\includegraphics[scale=0.4]{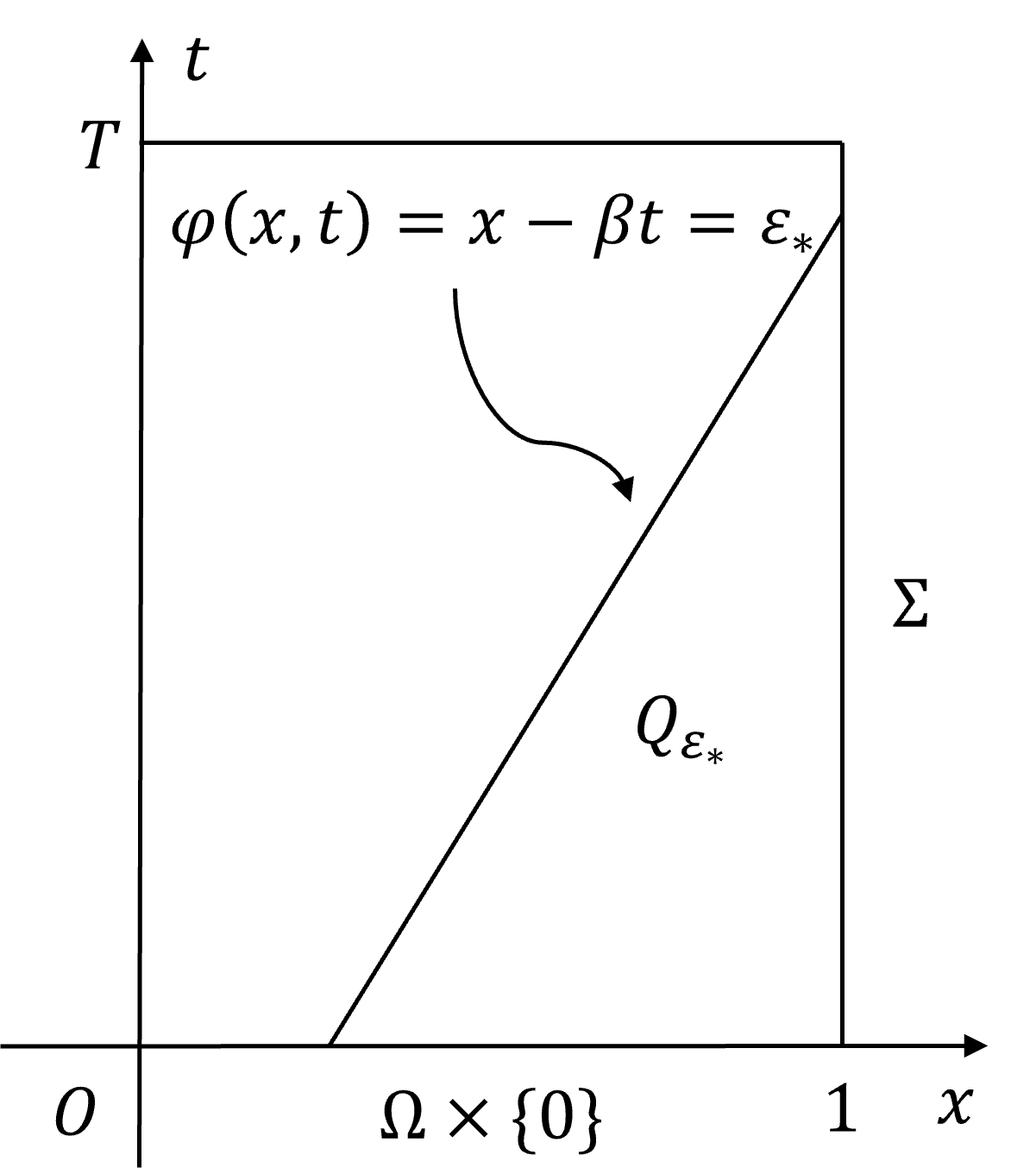}
\caption{On the assumption \eqref{geometry}.}
\end{figure}

\end{example}

\subsection{Inverse coefficient problem}
In this subsection, we assume that $A^0$ and $A$ do not depend on time, i.e., $A^0\in C^1(\overline{\Omega})$ and $A\in C^2(\overline{\Omega};\mathbb{R}^d)$. For given $g\in H^1(0,T;H^\frac{1}{2}(\partial\Omega))$ and a given subset $\Gamma\subset \partial\Omega$, we consider the Cauchy problem:
\begin{equation}\label{boundary2}\begin{cases}Pu+p(x,t)u=0\quad&\text{in}\ Q,\\
u=g\quad&\text{on}\ \Gamma\times(0,T).\end{cases}\end{equation}
Given $p$, we consider an inverse coefficient problem to determine the coefficients $A^0$ and $A$ in a local domain near $\Sigma$ from finitely many lateral boundary data on $\Gamma\times(0,T)$ and initial data, and show local H\"older stability.

For $A\in C^2(\overline{\Omega};\mathbb{R}^d)$, set
\[\Gamma_{+,A}:=\{x\in\partial\Omega\mid A(x)\cdot\nu(x)>0\}\]
and $\Gamma_{-,A}:=\partial\Omega\setminus\Gamma_{+,A}$.

For fixed $M>0$, $\rho>0$, and a subset $\Gamma\subset\partial\Omega$, define the conditional set
\begin{align*}&D(M,\rho,\Gamma)\\
&:=\left\{(A^0,A)\in C^1(\overline{\Omega})\times C^2(\overline{\Omega};\mathbb{R}^d)\middle| \begin{cases}\|A^0\|_{C^1(\overline{\Omega})}+\|A\|_{C^2(\overline{\Omega};\mathbb{R}^d)}\le M,\\
\displaystyle \min_{x\in\overline{\Omega}}A^0(x)\ge\rho,\ \min_{x\in\overline{\Omega}}|A(x)|\ge\rho,\\
\eqref{finiteness},\ \text{and}\ \Gamma\subset\Gamma_{+,A}\ \text{hold}.\end{cases}\right\}.\end{align*}

\begin{thm}\label{ICP}
Let $M>0$, $\rho>0$, $p\in W^{1,\infty}(0,T;L^\infty(\Omega))$, $g_{i,m}\in H^1(0,T;H^\frac{1}{2}(\partial\Omega))$ for $m=1,\ldots,d+1$ and $i=1,2$, $(A^0_i,A_i)\in D(M,\rho,\Gamma)$ for $i=1,2$, and $\Gamma\subset\partial\Omega$ be a subset satisfying
\[\exists\varepsilon_*\ge 0\ \text{s.t.}\ \varnothing\neq Q_{\varepsilon_*}\cap\partial Q\subset (\Gamma\times(0,T))\cup \big(\Omega\times\{0\}\big).\]
Then, there exist $\varepsilon^*>0$ such that for any $\varepsilon\in(\varepsilon_*,\varepsilon^*)$, there exists a constant $C>0$ and $\theta\in(0,1)$ independent of $(A_i^0,A_i)\in D(M,\rho,\Gamma)$ for $i=1,2$ such that
\[\sum_{\mu=0}^d\|A_1^\mu-A_2^\mu\|_{L^2(\Omega_\varepsilon)}\le C\Big(\mathfrak{D}+\mathfrak{F}^{1-\theta}\mathfrak{D}^\theta\Big),\]
for all $\displaystyle u_{i,m}\in \bigcap_{k=1}^2H^k(0,T;W^{2-k,\infty}(\Omega))$ satisfying \eqref{boundary2} with $P=P_i:=A^0_i\partial_t+A_i\cdot\nabla$ and $g=g_{i,m}$,
\[\sum_{k=1}^2\|u_{2,m}\|_{H^k(0,T;W^{2-k,\infty}(\Omega))}\le M\]
for all $m=1,\ldots,d+1$, and
\begin{equation}\label{R2}\exists m_0>0\ \text{s.t.}\ |p(x,0)|\left|\det \begin{pmatrix}u_{2,1}& \cdots&u_{2,d+1}\\ \nabla u_{2,1}& \cdots& \nabla u_{2,d+1}\end{pmatrix}(x,0)\right|\ge m_0\ \text{a.e.}\ x\in\Omega,\end{equation}
where
\[\mathfrak{F}:=\sum_{\mu=0}^d\|A_1^\mu-A_2^\mu\|_{L^2(\Omega)}+\sum_{m=1}^{d+1}\|u_{1,m}-u_{2,m}\|_{H^1(0,T;L^2(\Omega))}\]
and
\[\mathfrak{D}:=\sum_{m=1}^{d+1}\left(\|(u_{1,m}-u_{2,m})(\cdot,0)\|_{H^1(\Omega_{\varepsilon_*})}+\|g_{1,m}-g_{2,m}\|_{H^1(0,T;L^2(\Gamma))}\right).\]

\end{thm}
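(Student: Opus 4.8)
The plan is to reduce the inverse coefficient problem to an inverse source problem for a first-order \emph{system} with a common principal part, and then to run, componentwise, the Carleman/Bukhgeim--Klibanov argument underlying Theorem~\ref{ISP}; only the reduction itself and the replacement of the scalar non-degeneracy by a determinant condition require new input. I would set $v_m:=u_{1,m}-u_{2,m}$ and $B^\mu:=A_1^\mu-A_2^\mu$ for $\mu=0,1,\dots,d$ (with $A^0$ the time coefficient), subtract $P_iu_{i,m}+pu_{i,m}=0$ over $i=1,2$, and use that $u_{2,m}$ solves its own equation to eliminate $\partial_tu_{2,m}=-(A_2\cdot\nabla u_{2,m}+pu_{2,m})/A_2^0$ (legitimate since $A_2^0\ge\rho$); this leads, for $m=1,\dots,d+1$, to the system
\[P_1v_m+pv_m=R_{0,m}h^0+\sum_{j=1}^dR_{j,m}h^j,\]
where $R_{0,m}(x,t):=p(x,t)u_{2,m}(x,t)/A_2^0(x)$, $R_{j,m}:=\partial_{x_j}u_{2,m}\in H^1(0,T;L^\infty(\Omega))$, and the unknowns $h^0:=B^0$, $h^j:=B^0A_2^j/A_2^0-B^j$ ($j=1,\dots,d$) depend only on $x$; the map $\vec B=(B^0,\dots,B^d)\mapsto\vec h=(h^0,\dots,h^d)$ is invertible a.e.\ with itself and its inverse bounded in terms of $M$ and $\rho$, so it suffices to bound $\sum_{\mu=0}^d\|h^\mu\|_{L^2(\Omega_\varepsilon)}$. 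This system is of exactly the form covered by Theorem~\ref{ISP}, save that it is vectorial: one operator $P_1$ and $d+1$ right-hand sides built from the fixed $x$-dependent vector $\vec h$.

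The role of the scalar hypothesis $|R(x,0)|\ge m_0$ would be played by \eqref{R2}: evaluating the system at $t=0$ gives $\big((P_1v_m+pv_m)(x,0)\big)_{m=1}^{d+1}=\mathcal M(x)^{\mathsf T}\vec h(x)$ with $\mathcal M(x)$ the $(d+1)\times(d+1)$ matrix whose first row is $p(x,0)A_2^0(x)^{-1}(u_{2,1},\dots,u_{2,d+1})(x,0)$ and whose remaining rows are $(\partial_{x_j}u_{2,1},\dots,\partial_{x_j}u_{2,d+1})(x,0)$, so that
\[\det\mathcal M(x)=\frac{p(x,0)}{A_2^0(x)}\,\det\begin{pmatrix}u_{2,1}&\cdots&u_{2,d+1}\\ \nabla u_{2,1}&\cdots&\nabla u_{2,d+1}\end{pmatrix}(x,0).\]
By \eqref{R2} together with the a priori bounds $\sum_k\|u_{2,m}\|_{H^k(0,T;W^{2-k,\infty}(\Omega))}\le M$ and $(A_2^0,A_2)\in D(M,\rho,\Gamma)$, the norm $\|\mathcal M(x)^{-1}\|$ is bounded a.e.\ by a constant depending only on $M$, $m_0$, $p$; hence a.e.\ in $\Omega$
\[\sum_{\mu=0}^d|h^\mu(x)|\le C\sum_{m=1}^{d+1}\Big(|\partial_tv_m(x,0)|+|\nabla v_m(x,0)|+|v_m(x,0)|\Big).\]
Integrating over $\Omega_\varepsilon\subset\Omega_{\varepsilon_*}$ and putting the $v_m(\cdot,0)$ and $\nabla v_m(\cdot,0)$ terms into $\mathfrak D$, the whole estimate reduces to bounding $\sum_m\|\partial_tv_m(\cdot,0)\|_{L^2(\Omega_\varepsilon)}$.

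This last bound I would obtain exactly as in Theorem~\ref{ISP}, component by component. Since $A_1^0$ and $A_1$ are time-independent, $z_m:=\partial_tv_m$ solves $P_1z_m+pz_m=-(\partial_tp)v_m+\sum_\mu(\partial_tR_{\mu,m})h^\mu$; applying the Carleman estimate for $P_1$ with the weight $e^{s\varphi}$ of the Preliminaries (built from the integral curves of $A_1$, its constants depending only on $M$ and $\rho$ — which is what makes the final constants uniform over $D(M,\rho,\Gamma)$) to suitable cut-offs of $v_m$ and $z_m$, the geometric assumption on $\Gamma$ confines the relevant lateral boundary terms to $\Gamma\times(0,T)$, where they are controlled by $\sum_m\|g_{1,m}-g_{2,m}\|_{H^1(0,T;L^2(\Gamma))}$. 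Because $\vec h$ is time-independent and $\varphi(x,t)=\varphi_0(x)-\beta t$, one has $\int_0^Te^{2s\varphi(x,t)}\,dt\le e^{2s\varphi(x,0)}/(2s\beta)$, so every space-time weighted integral of $\sum_\mu|h^\mu|^2$ on the right-hand side of the Carleman estimate becomes $Cs^{-1}\int e^{2s\varphi(\cdot,0)}\sum_\mu|h^\mu|^2$, which — via the pointwise inequality above and the $\{t=0\}$ trace term on the left-hand side — is absorbed for $s$ large, while $(\partial_tp)v_m$ is absorbed by coupling with the Carleman estimate for $v_m$. Optimizing the parameter $s$ (splitting into the cases $\mathfrak F\le\mathfrak D$ and $\mathfrak F>\mathfrak D$) then yields $\sum_m\|\partial_tv_m(\cdot,0)\|_{L^2(\Omega_\varepsilon)}\le C(\mathfrak D+\mathfrak F^{1-\theta}\mathfrak D^\theta)$ for some $\theta\in(0,1)$, valid for $\varepsilon$ below a threshold $\varepsilon^*$ fixed by the non-emptiness of $\Omega_{\varepsilon^*}$ and the geometric conditions behind the Carleman estimate; reverting $\vec h\mapsto\vec B$ completes the proof. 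I expect the main obstacle to be precisely this absorption step, in the usual Bukhgeim--Klibanov spirit: reabsorbing the coupled zeroth-order terms $\sum_\mu(\partial_tR_{\mu,m})h^\mu$ and $(\partial_tp)v_m$ into the left-hand side of the Carleman estimate forces one to run the Carleman inequalities for $v_m$ and $\partial_tv_m$ together, to choose the cut-offs and the parameter $\beta$ compatibly, and to bookkeep powers of $s$ with care — and it is there that the structural identity $\varphi=\varphi_0-\beta t$, together with the time-independence of $\vec h$, is indispensable. Beyond that, the only genuinely new, non-routine ingredient compared with Theorem~\ref{ISP} is the algebraic one, namely the invertibility of $\mathcal M(x)$ guaranteed by \eqref{R2} in place of $|R(x,0)|\ge m_0$.
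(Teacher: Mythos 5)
Your proposal is correct and follows essentially the same route as the paper: reduce to the $(d+1)\times(d+1)$ system $P_1v_m+pv_m=R_mF$ with the time-independent unknown vector $F=(A_1^0-A_2^0,A_1-A_2)$, use the equation for $u_{2,m}$ at $t=0$ together with \eqref{R2} to get a uniform lower bound on the relevant determinant, apply the Carleman estimate to $\chi\partial_tv_m$ and $\chi v_m$, absorb the weighted $F$-integrals (the paper gets $o(1)$ by dominated convergence rather than your $O(s^{-1})$, which matters only because $\partial_tR$ is merely $L^2$ in time), and optimize in $s$. Your preliminary substitution $\vec B\mapsto\vec h$ is a triangular change of variables that the paper instead performs inside the determinant; the two are equivalent.
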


\subsection{Carleman estimate}

The following Carleman estimate Proposition \ref{Carleman} is decisive to prove the main results. For the proof of Proposition \ref{Carleman}, see Proposition 3.1 and Lemma 3.2 in \cite{Floridia2021}.

\begin{prop}\label{Carleman}
Let $A^0\in C^1(\overline{Q})$ satisfying $\displaystyle\min_{(x,t)\in\overline{Q}}A^0(x,t)>0$, $A\in C^1(\overline{Q};\mathbb{R}^d)$ satisfying \eqref{positivity}--\eqref{spd}, $p\in L^\infty(Q)$, and $\varphi\in C(\overline{\Omega})\cap H^2(Q)$ be the weight function defined by \eqref{weight} satisfying
\[0<\beta<\frac{\rho}{\displaystyle\sup_{x\in\Omega,t>0}A^0(x,t)}.\]
Then, there exist constants $s_*>0$ and $C>0$ such that
\begin{align}\label{estimate}
&s^2\int_{Q} e^{2s\varphi}|u|^2dxdt+s\int_\Omega e^{2s\varphi(x,0)}|u(x,0)|^2dx\\
&\le C\int_{Q} e^{2s\varphi}|(P+p(x,t))u|^2dxdt+Cs\int_{\Sigma_+} e^{2s\varphi}|u|^2dS dt\notag\end{align}
holds for all $s>s_*$ and $\displaystyle u\in \bigcap_{k=0}^1H^k(0,T;H^{1-k}(\Omega))$ satisfying $u(\cdot,T)=0$, where $dS$ denotes the area element of $\partial\Omega$.
\end{prop}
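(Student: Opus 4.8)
The plan is to prove \eqref{estimate} by the classical conjugation method for first-order operators, the whole argument resting on one structural property of the weight. First I would record the behaviour of $\varphi$ under $P$. Since $\partial_t\varphi\equiv-\beta$ and $\nabla\varphi=\nabla\varphi_0$, differentiating $\varphi_0(c_x(\tau))=\int_{\sigma_-(x)}^\tau|c_x'(\sigma)|\,d\sigma$ along the integral curve and using $c_x'=A(\cdot,0)$ gives $A(x,0)\cdot\nabla\varphi_0(x)=|A(x,0)|$ a.e. By the Remark, $A(x,t)=A(x,0)e^{\int_0^t\phi(x,s)ds}$ is a positive multiple of $A(x,0)$, so $A(x,t)\cdot\nabla\varphi_0(x)=|A(x,t)|$ and therefore
\[P\varphi=A^0\partial_t\varphi+A\cdot\nabla\varphi=|A(x,t)|-\beta A^0(x,t)\ge\rho-\beta\sup_{x,t}A^0(x,t)=:\delta>0,\]
the last step being exactly the hypothesis on $\beta$ together with \eqref{positivity}. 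The crucial gain is that, although $\varphi_0$ is only $H^2$, the directional derivative $A\cdot\nabla\varphi_0$ coincides with the $C^2$ function $|A|$, so that $P\varphi\in C^1(\overline{Q})$ and no second derivatives of $\varphi_0$ ever survive.

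Next I would set $w:=e^{s\varphi}u$, so that $e^{s\varphi}Pu=Pw-s(P\varphi)w=:P_sw$ and hence $\int_Q e^{2s\varphi}|Pu|^2\,dxdt=\int_Q|P_sw|^2\,dxdt$. Expanding the square and discarding the nonnegative term $\int_Q|Pw|^2$ gives
\[\int_Q|P_sw|^2\,dxdt\ge s^2\int_Q(P\varphi)^2|w|^2\,dxdt-2s\,\R\int_Q(P\varphi)\overline{w}\,Pw\,dxdt.\]
For the cross term I would use $\R(\overline{w}\,Pw)=\tfrac12 A^0\partial_t|w|^2+\tfrac12 A\cdot\nabla|w|^2$ and integrate by parts in $t$ and $x$. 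The time integration produces $-s\int_\Omega[(P\varphi)A^0|w|^2]_{t=0}^{t=T}\,dx$, whose $t=T$ contribution vanishes because $u(\cdot,T)=0$ and whose $t=0$ contribution is the favourable $+s\int_\Omega(P\varphi\,A^0)(x,0)|w(x,0)|^2\,dx$; the spatial integration produces $-s\int_{\partial\Omega\times(0,T)}(P\varphi)(A\cdot\nu)|w|^2\,dSdt$. The remaining volume coefficients $\partial_t((P\varphi)A^0)$ and $\divergence((P\varphi)A)$ are bounded precisely because $(P\varphi)A^0$ and $(P\varphi)A$ lie in $C^1(\overline{Q})$ by the first step.

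It then remains to read off signs. On $\Sigma_-$ one has $A\cdot\nu\le0$ and $P\varphi\ge\delta>0$, so $-s\int_{\Sigma_-}(P\varphi)(A\cdot\nu)|w|^2\ge0$ is favourable and may simply be dropped; this is exactly why no observation on $\Sigma_-$ is needed. The $\Sigma_+$ contribution is bounded by $Cs\int_{\Sigma_+}e^{2s\varphi}|u|^2\,dSdt$ and moved to the right-hand side. Using $P\varphi\ge\delta$ gives $s^2\int_Q(P\varphi)^2|w|^2\ge\delta^2 s^2\int_Q|w|^2$, while $(P\varphi\,A^0)(x,0)\ge\delta\min A^0>0$ turns the $t=0$ term into a genuine lower bound for $s\int_\Omega e^{2s\varphi(x,0)}|u(x,0)|^2\,dx$. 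Choosing $s_*$ so large that the $O(s)$ volume remainder is absorbed by $\tfrac12\delta^2 s^2\int_Q|w|^2$, and likewise absorbing the zeroth-order perturbation through $|Pu|^2\le2|(P+p)u|^2+2\|p\|_\infty^2|u|^2$, I would recover \eqref{estimate} after returning to $u=e^{-s\varphi}w$.

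The main obstacle is rigor rather than strategy: since $\varphi_0\in H^2(\Omega)$ only, the integrations by parts are not literally licensed for $u\in\bigcap_{k=0}^1 H^k(0,T;H^{1-k}(\Omega))$, and one must justify them by approximating $\varphi_0$ and $u$ by smooth functions and passing to the limit, verifying that each limit uses only the boundedness of $P\varphi$, $\partial_t((P\varphi)A^0)$ and $\divergence((P\varphi)A)$ — which is exactly what the identity $A\cdot\nabla\varphi_0=|A|$ secures. The remaining care is quantitative: pinning down how large $s_*$ must be so that both the $O(s)$ volume remainder and the $\|p\|_\infty^2$ term are dominated by the leading $s^2$ term uniformly on $\overline{Q}$.
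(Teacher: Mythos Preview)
The paper does not actually prove this proposition: it simply refers the reader to Proposition~3.1 and Lemma~3.2 of \cite{Floridia2021}. So there is no in-paper argument to compare against. Your sketch is the standard conjugation proof for a Carleman estimate for a first-order operator, and it is essentially what appears in that reference. In particular, the two structural points you isolate are exactly the ones that make the argument work there: (i) the identity $A(x,0)\cdot\nabla\varphi_0=|A(\cdot,0)|$ obtained by differentiating $\varphi_0\circ c_x$ along the integral curve, which together with the representation $A(x,t)=A(x,0)e^{\int_0^t\phi}$ upgrades to $A(x,t)\cdot\nabla\varphi_0=|A(x,t)|$ and hence $P\varphi=|A|-\beta A^0\ge\delta>0$ with $P\varphi\in C^1(\overline{Q})$; and (ii) the sign $A\cdot\nu\le0$ on $\Sigma_-$, which makes that boundary contribution nonnegative after integration by parts and disposes of the need for data on $\Sigma_-$. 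Your handling of the cross term, the $t=0$ and $t=T$ traces, and the absorption of the $O(s)$ remainder and the zeroth-order $p$ are all correct.

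Your closing caveat about rigor is also well placed: the density/approximation step needed because $\varphi_0\in H^2$ only is precisely what Lemma~3.2 of \cite{Floridia2021} supplies. In short, your proposal is correct and coincides with the argument the paper is citing.
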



\section{Proof of Theorem \ref{ISP}}
\begin{proof}[Proof of Theorem \ref{ISP}]
For a sufficiently small $\varepsilon>\varepsilon_*$, let $\chi\in C^\infty(\overline{Q})$ be a cutoff function such that
\begin{equation}\label{cutoff}\chi(x,t):=\begin{cases}1,\quad &(x,t)\in \overline{Q_{2\varepsilon}},\\
0,\quad &(x,t)\in \overline{Q\setminus Q_\varepsilon}.\end{cases}\end{equation}
Henceforth, by $C>0$ we denote a generic constant independent of $u$ and $f$ which may change from line to line, unless specified otherwise. Applying the Carleman estimate \eqref{estimate} of Proposition \ref{Carleman} to $\displaystyle\chi\partial_tu\in\bigcap_{k=0}^1H^k(0,T;H^{1-k}(\Omega))$ yields
\begin{align}\label{partial_t3}
&s^2\int_{Q} e^{2s\varphi}|\chi\partial_tu|^2dxdt+s\int_\Omega e^{2s\varphi(x,0)}|\chi(x,0)\partial_t u(x,0)|^2dx\\
&\le C\int_{Q} e^{2s\varphi}|(P+p(x,t))(\chi\partial_tu)|^2dxdt+Cs\int_{\Sigma_+} e^{2s\varphi}|\chi\partial_tu|^2dSdt.\notag\end{align}
Since we obtain
\begin{align*}(P+p(x,t))(\chi\partial_tu)&=\chi\partial_t\Big(A^0(x,t)\partial_tu+A(x,t)\cdot\nabla u+p(x,t)u\Big)\\
&\quad-\chi\partial_tA^0(x,t)\partial_tu-\chi\partial_tA(x,t)\cdot\nabla u-\chi\partial_tp(x,t)u\\
&\quad+[A^0(x,t)\partial_t\chi\partial_tu+A(x,t)\cdot\nabla\chi \partial_tu]\\
&=\chi \partial_tR(x,t)f(x)-\chi\partial_tA^0(x,t)\partial_tu-\chi\partial_tA(x,t)\cdot\nabla u\\
&\quad-\chi\partial_tp(x,t)u+(P\chi)\partial_tu,\end{align*}
we have
\begin{align}\label{above3}|(P+p(x,t))(\chi\partial_tu)|^2&\le C\Big(|\partial_tRf|^2+|\chi\partial_tu|^2+|\chi\partial_tA(x,t)\cdot\nabla u|^2+|\chi u|^2\Big)\\
&\quad +C|P\chi|^2|\partial_tu|^2\notag\\
&\le C\Big(|\partial_tRf|^2+|\chi\partial_tu|^2+|\chi A(x,t)\cdot\nabla u|^2+|\chi u|^2\Big)\notag\\
&\quad +C|P\chi|^2|\partial_tu|^2\notag,\end{align}
where we used \eqref{spd} to obtain the second inequality. Therefore, applying the equation in \eqref{boundary} to the above estimate \eqref{above3} yields
\begin{align}\label{key3}|(P+p(x,t))(\chi\partial_tu)|^2&\le C\Big(|\partial_tRf|^2+|Rf|^2+|\chi\partial_tu|^2+|\chi u|^2\Big)\\
&\quad +C|P\chi|^2|\partial_tu|^2.\notag\end{align}
Applying \eqref{key3} to \eqref{partial_t3} and choosing $s>s_*$ large enough yield
\begin{align}\label{partial_t'3}
&s^2\int_{Q} e^{2s\varphi}|\chi\partial_tu|^2dxdt+s\int_\Omega e^{2s\varphi_0(x)}|\chi(x,0)\partial_t u(x,0)|^2dx\\
&\le C\int_{Q} e^{2s\varphi}\Big(\sum_{k=0}^1|\partial_t^kR|^2\Big)|f|^2dxdt+C\int_{Q}e^{2s\varphi}|\chi u|^2dxdt\notag\\
&\quad+C\int_{Q}e^{2s\varphi}|P\chi|^2|\partial_tu|^2dxdt+Cs\int_{\Sigma} e^{2s\varphi}|\partial_tu|^2dSdt.\notag\end{align}
In regard to the left-hand side of \eqref{partial_t'3}, we obtain
\begin{align}\label{left3}&s^2\int_{Q} e^{2s\varphi}|\chi\partial_tu|^2dxdt+s\int_\Omega e^{2s\varphi_0(x)}|\chi(x,0)\partial_t u(x,0)|^2dx\\
&\ge s\int_\Omega e^{2s\varphi_0(x)}|\chi(x,0)\partial_t u(x,0)|^2dx\notag\\
&\ge Cs\int_\Omega e^{2s\varphi_0(x)}\Big|\chi(x,0)\Big(R(x,0)f(x)-A(x,0)\cdot\nabla u(x,0)\notag\\
&\hspace{200pt}-p(x,0)u(x,0)\Big)\Big|^2dx\notag\\
&\ge Cs\|e^{s\varphi_0}f\|_{L^2(\Omega_{2\varepsilon})}^2-Cs\|e^{s\varphi_0}\nabla u(\cdot,0)\|_{L^2(\Omega_{\varepsilon_*})}^2-Cs\|e^{s\varphi_0}u(\cdot,0)\|_{L^2(\Omega_{\varepsilon_*})}^2\notag\end{align}
for some $C>0$ by \eqref{R}. In regard to the right-hand side of \eqref{partial_t'3}, applying the Carleman estimate \eqref{estimate} of Proposition \ref{Carleman} to $\displaystyle\chi u\in \bigcap_{k=1}^2H^{k}(0,T;H^{2-k}(\Omega))$ yields
\begin{align}\label{right3}
&\int_{Q} e^{2s\varphi}|\chi u|^2dxdt+\frac{1}{s}\int_\Omega e^{2s\varphi_0(x)}|\chi(x,0)u(x,0)|^2dx\\
&\le \frac{C}{s^2}\int_{Q} e^{2s\varphi}|Rf|^2dxdt+\frac{C}{s^2}\int_{Q}|P\chi|^2|u|^2dxdt\notag\\
&\quad+\frac{C}{s}\int_{\Sigma_+} e^{2s\varphi}|\chi u|^2dS dt.\notag\end{align}
Applying \eqref{left3} and \eqref{right3} to \eqref{partial_t'3} yields
\begin{align*}
s\|e^{s\varphi_0}f\|_{L^2(\Omega_{2\varepsilon})}^2&\le Cs\|e^{s\varphi_0}\nabla u(\cdot,0)\|_{L^2(\Omega_{\varepsilon_*})}^2+Cs\|e^{s\varphi_0}u(\cdot,0)\|_{L^2(\Omega_{\varepsilon_*})}^2\\
&\quad+C\int_{Q} e^{2s\varphi}\Big(\sum_{k=0}^1|\partial_t^kR|^2\Big)|f|^2dxdt+\frac{C}{s^2}\int_{Q} e^{2s\varphi}|Rf|^2dxdt\\
&\quad+C\int_{Q}e^{2s\varphi}|P\chi|^2|\partial_tu|^2dxdt+\frac{C}{s^2}\int_{Q}|P\chi|^2|u|^2dxdt\\
&\quad+Cs\int_{\Sigma} e^{2s\varphi}|\partial_tu|^2dSdt+\frac{C}{s}\int_{\Sigma_+} e^{2s\varphi}|\chi u|^2dS dt\end{align*}
and choosing sufficiently large $s>s_*$ yields
\begin{align*}&s\|e^{s\varphi_0}f\|_{L^2(\Omega_{2\varepsilon})}^2\\
&\le C\int_{Q} e^{2s\varphi}\Big(\sum_{k=0}^1|\partial_t^kR|^2\Big)|f|^2dxdt+Cse^{Cs}\|u(\cdot,0)\|_{H^1(\Omega_{\varepsilon_*})}^2\\
&\quad+C\int_{Q}e^{2s\varphi}|P\chi|^2|\partial_tu|^2dxdt+\frac{C}{s^2}\int_{Q}e^{2s\varphi}|P\chi|^2|u|^2dxdt\\
&\quad+Cs\int_{\Sigma} e^{2s\varphi}|\partial_tu|^2dSdt+\frac{C}{s}\int_{\Sigma}e^{2s\varphi}|u|^2dSdt\notag\\
&\le C\int_{Q} e^{2s\varphi}\Big(\sum_{k=0}^1|\partial_t^kR|^2\Big)|f|^2dxdt+Cse^{Cs}\|u(\cdot,0)\|_{H^1(\Omega_{\varepsilon_*})}^2+Ce^{4\varepsilon s}\|u\|_{H^1(0,T;L^2(\Omega))}^2\\
&\quad+Cse^{Cs}\Big(\sum_{k=0}^1\|\partial_t^ku\|_{L^2(\Sigma)}^2\Big)\\
&=C\int_0^T\int_{\Omega_{2\varepsilon}} e^{2s\varphi}\Big(\sum_{k=0}^1|\partial_t^kR|^2\Big)|f|^2dxdt+C\int_0^{T}\int_{\Omega\setminus \Omega_{2\varepsilon}} e^{2s\varphi}\Big(\sum_{k=0}^1|\partial_t^kR|^2\Big)|f|^2dxdt\\
&\quad+Ce^{4\varepsilon s}\|u\|_{H^1(0,T;L^2(\Omega))}^2+Cse^{Cs}\Big(\|u(\cdot,0)\|_{H^1(\Omega_{\varepsilon_*})}^2+\sum_{k=0}^1\|\partial_t^ku\|_{L^2(\Sigma)}^2\Big)\\
&\le C\int_{\Omega_{2\varepsilon}}\left(\int_0^{T}e^{-2s(\varphi_0(x)-\varphi(x,t))}\Big(\sum_{k=0}^1\|\partial_t^kR(\cdot,t)\|_{L^\infty(\Omega)}^2\Big)dt\right)e^{2s\varphi_0}|f|^2dx\\
&\quad+Ce^{4\varepsilon s}\int_{Q}\Big(\sum_{k=0}^1\|\partial_t^kR(\cdot,t)\|_{L^\infty(\Omega)}^2\Big)|f|^2dxdt+Ce^{4\varepsilon s}\|u\|_{H^1(0,T;L^2(\Omega))}^2\\
&\quad+Cse^{Cs}\Big(\|u(\cdot,0)\|_{H^1(\Omega_{\varepsilon_*})}^2+\sum_{k=0}^1\|\partial_t^ku\|_{L^2(\Sigma)}^2\Big)\\
&\le o(1)\|e^{s\varphi_0}f\|_{L^2(\Omega_{2\varepsilon})}^2+Ce^{4\varepsilon s}\Big(\|f\|_{L^2(\Omega)}^2+\|u\|_{H^1(0,T;L^2(\Omega))}^2\Big)\\
&\quad+Cse^{Cs}\Big(\|u(\cdot,0)\|_{H^1(\Omega_{\varepsilon_*})}^2+\sum_{k=0}^1\|\partial_t^ku\|_{L^2(\Sigma)}^2\Big)\end{align*}
as $s\to+\infty$ by the Lebesgue dominated convergence theorem. Choosing $s>s_*$ large enough yields
\begin{align*}\|e^{s\varphi_0}f\|_{L^2(\Omega_{2\varepsilon})}^2&\le Ce^{4\varepsilon s}\Big(\|f\|_{L^2(\Omega)}^2+\|u\|_{H^1(0,T;L^2(\Omega))}^2\Big)\\
&\quad+Ce^{Cs}\Big(\|u(\cdot,0)\|_{H^1(\Omega_{\varepsilon_*})}^2+\sum_{k=0}^1\|\partial_t^ku\|_{L^2(\Sigma)}^2\Big)\\
&\le C\Big(e^{2\varepsilon s}\mathcal{F}+e^{Cs}\mathcal{D}\Big)^2.\end{align*}
Since $\varphi_0(x)>3\varepsilon$ in $\Omega_{3\varepsilon}$, $\|e^{s\varphi_0}f\|_{L^2(\Omega_{2\varepsilon})}^2\ge e^{6\varepsilon s}\|f\|_{L^2(\Omega_{3\varepsilon})}^2$ holds. Then, we obtain
\begin{equation}\label{minimize}\|f\|_{L^2(\Omega_{3\varepsilon})}\le C\Big(e^{-\varepsilon s}\mathcal{F}+e^{Cs}\mathcal{D}\Big)\end{equation}
for sufficiently large $s>s_*$. By replacing $C$ by $Ce^{Cs_*}$, the above estimate holds for all $s>0$. When $\mathcal{D}\ge \mathcal{F}$, \eqref{minimize} implies
\[\|f\|_{L^2(\Omega_{3\varepsilon})}\le Ce^{Cs}\mathcal{D}.\]
On the other hand when $\mathcal{D}<\mathcal{F}$, we choose $s>0$ to minimize the right-hand side of \eqref{minimize} such that
\[e^{Cs}\mathcal{D}=e^{-\varepsilon s}\mathcal{F}\]
i.e.,
\[s=\frac{1}{C+\varepsilon}\log\frac{\mathcal{F}}{\mathcal{D}}.\]
Therefore, we obtain
\[\|f\|_{L^2(\Omega_{3\varepsilon})}\le 2C\mathcal{F}^{1-\theta}\mathcal{D}^\theta,\]
where
\[\theta:=\frac{\varepsilon}{C+\varepsilon}\in(0,1).\]
Hence, there exist constants $C>0$ and $\theta\in(0,1)$ such that
\[\|f\|_{L^2(\Omega_{3\varepsilon})}\leq C\Big(\mathcal{D}+\mathcal{F}^{1-\theta}\mathcal{D}^\theta\Big).\]
\end{proof}

\section{Proof of Theorem \ref{ICP}}
\begin{proof}[Proof of Theorem \ref{ICP}]
Let $\chi$ be the cutoff function defined by \eqref{cutoff}. Henceforth, by $C>0$ we denote a generic constant independent of $u_{i,m}$, $A_i^0$, and $A_i$ which may change from line to line, unless specified otherwise. For $m=1,\ldots,d+1$, setting
\[v_m:=u_{1,m}-u_{2,m},\quad f_1:=A^0_1-A^0_2,\quad f_2:=A_1-A_2,\]
and
\begin{gather*}F:=\begin{pmatrix}f_1\\ f_2\end{pmatrix}\in L^2(\Omega;\mathbb{R}^{d+1}),\\
R_m:=\begin{pmatrix}-\partial_t u_{2,m}& -\partial_{x^1}u_{2,m}&\cdots &-\partial_{x^d}u_{2,m}\end{pmatrix}\in H^1(0,T;L^\infty(\Omega;\mathbb{R}^{d+1})).\end{gather*}
Thus, we obtain
\begin{align*}\begin{cases}P_1v_m+p(x,t)v_m=R_m(x,t)F(x)\quad &\text{in}\ Q,\\
v_m=g_{1,m}-g_{2,m}\quad &\text{on}\ \Gamma\times(0,T),\end{cases}\end{align*}
where the product in the right-hand side of the equation is a product of matrices. Applying the Carleman estimate \eqref{estimate} of Proposition \ref{Carleman} with $P=P_1$ to
\[\chi\partial_tv_m\in\bigcap_{k=0}^1H^{k}(0,T;W^{1-k,\infty}(\Omega))\subset\bigcap_{k=0}^1H^{k}(0,T;H^{1-k}(\Omega))\]
yields
\begin{align*}
&s^2\int_{Q} e^{2s\varphi}|\chi\partial_tv_m|^2dxdt+s\int_\Omega e^{2s\varphi(x,0)}|\chi(x,0)\partial_t v_m(x,0)|^2dx\\
&\le C\int_{Q} e^{2s\varphi}|(P_1+p(x,t))(\chi\partial_tv_m)|^2dxdt+Cs\int_{\Gamma_{+,A_1}\times(0,T)} e^{2s\varphi}|\chi\partial_tv_m|^2dSdt.\end{align*}
Summing up with respect to $m=1,\ldots,d+1$ yields
\begin{align}\label{partial_t2}
&s^2\int_{Q} e^{2s\varphi}|\chi\partial_tv|^2dxdt+s\int_\Omega e^{2s\varphi(x,0)}|\chi(x,0)\partial_t v(x,0)|^2dx\\
&\le C\int_{Q} e^{2s\varphi}|(P_1+p(x,t))(\chi\partial_tv)|^2dxdt+Cs\int_{\Gamma_{+,A_1}\times(0,T)} e^{2s\varphi}|\chi\partial_tv|^2dSdt\notag,\end{align}
where we define
\[v:=\begin{pmatrix}v_1\\ \vdots\\ v_{d+1}\end{pmatrix},\quad R:=\begin{pmatrix}R_1\\ \vdots\\ R_{d+1}\end{pmatrix},\quad (P_1+p(x,t))\partial_tv:=\begin{pmatrix}(P_1+p(x,t))\partial_tv_1\\ \vdots\\ (P_1+p(x,t))\partial_tv_{d+1}\end{pmatrix}.\]
Since we obtain
\begin{align*}(P_1+p(x,t))(\chi\partial_tv_m)&=\chi\partial_t\Big(A^0_1(x)\partial_tv_m+A_1(x)\cdot\nabla v_m+p(x,t)v_m\Big)-\chi\partial_tp(x,t)v_m\\
&\quad +[A^0(x)\partial_t\chi \partial_t v_m+A(x)\cdot \nabla\chi \partial_t v_m]\\
&=\partial_t(R_mF)-\chi\partial_tp(x,t)v_m+(P\chi)\partial_t v_m\end{align*}
for each $m=1,\ldots,d+1$, we have
\begin{equation}\label{key2}|(P_1+p(x,t))(\chi\partial_tv)|^2\le C\Big(|\partial_tRF|^2+|\chi v|^2+|P\chi|^2|\partial_t u|^2\Big).\end{equation}
Applying \eqref{key2} to \eqref{partial_t2} and choosing $s>s_*$ large enough yield
\begin{align}\label{partial_t'2}
&s^2\int_{Q} e^{2s\varphi}|\chi\partial_tv|^2dxdt+s\int_\Omega e^{2s\varphi(x,0)}|\chi(x,0)\partial_t v(x,0)|^2dx\\
&\le C\int_{Q} e^{2s\varphi}|\partial_tRF|^2dxdt+C\int_Qe^{2s\varphi}|\chi v|^2dxdt\notag\\
&\quad+C\int_Q e^{2s\varphi}|P\chi|^2|\partial_tv|^2dSdt+Cs\int_{\Gamma\times(0,T)}e^{2s\varphi}|\partial_t v|^2dSdt.\notag\end{align}
In regard to the left-hand side of \eqref{partial_t'2}, we obtain
\begin{align}\label{left2}&s^2\int_{Q} e^{2s\varphi}|\chi\partial_tv|^2dxdt+s\int_\Omega e^{2s\varphi(x,0)}|\chi(x,0)\partial_t v(x,0)|^2dx\\
&\ge s\int_\Omega e^{2s\varphi(x,0)}|\chi(x,0)\partial_t v(x,0)|^2dx\notag\\
&\ge Cs\int_\Omega e^{2s\varphi(x,0)}\Big|\chi(x,0)\Big(R(x,0)F(x)-A(x)\cdot\nabla v(x,0)\notag\\
&\hspace{200pt}-p(x,0)v(x,0)\Big)\Big|^2dx\notag\\
&\ge Cs\|e^{s\varphi_0}F\|_{L^2(\Omega_{2\varepsilon};\mathbb{R}^d)}^2-Cs\|e^{s\varphi_0}\nabla v(\cdot,0)\|_{L^2(\Omega_{\varepsilon_*};\mathbb{R}^d)}^2\notag\\
&\quad-Cs\|e^{s\varphi_0}v\|_{L^2(\Omega_{\varepsilon_*};\mathbb{R}^d)}^2\notag\end{align}
for some $C>0$ by \eqref{R2}. Indeed, by $\displaystyle\min_{x\in\overline{\Omega}}A^0_2(x)\ge\rho>0$, it follows that
\begin{align*}|\det R(x,0)|&=\left|\det \begin{pmatrix} \partial_tu_{2,1}(x,0)& \cdots& \partial_tu_{2,d+1}(x,0)\\ \nabla u_{2,1}(x,0)& \cdots & \nabla u_{2,d+1}(x,0)\end{pmatrix}\right|\\
&\ge C\left|\det \begin{pmatrix} A_2\cdot\nabla u_{2,1}+p(x,0)u_{2,1}& \cdots& A_2\cdot\nabla u_{2,d+1}+p(x,0)u_{2,d+1}\\ \nabla u_{2,1}(x,0)& \cdots & \nabla u_{2,d+1}(x,0)\end{pmatrix}\right|\\
&= C\left|\det \begin{pmatrix} p(x,0)u_{2,1}(x,0)& \cdots& p(x,0)u_{2,d+1}(x,0)\\ \nabla u_{2,1}(x,0)& \cdots & \nabla u_{2,d+1}(x,0)\end{pmatrix}\right|\\
&= C|p(x,0)|\left|\det \begin{pmatrix} u_{2,1}(x,0)& \cdots& u_{2,d+1}(x,0)\\ \nabla u_{2,1}(x,0)& \cdots & \nabla u_{2,d+1}(x,0)\end{pmatrix}\right|\ge m_0\quad \text{a.e.}\ x\in\Omega.\end{align*}
In regard to the right-hand side of \eqref{partial_t'2}, applying the Carleman estimate \eqref{estimate} of Proposition \ref{Carleman} to $\displaystyle \chi v_m\in\bigcap_{k=1}^2H^{k}(0,T;W^{2-k,\infty}(\Omega))$ for each $m=1,\ldots,d+1$ yield
\begin{align}\label{right2}
&\int_{Q} e^{2s\varphi}|\chi v|^2dxdt\\
&\le \frac{C}{s^2}\int_{Q} e^{2s\varphi}|RF|^2dxdt+\frac{C}{s^2}\int_Q |P\chi|^2|v|^2dxdt\notag\\
&\quad+\frac{C}{s}\int_{\Gamma_{+,A_1}\times(0,T)} e^{2s\varphi}|\chi v|^2dS dt.\notag\end{align}
Applying \eqref{left2} and \eqref{right2} to \eqref{partial_t'2} yields
\begin{align*}
s\|e^{s\varphi_0}F\|_{L^2(\Omega_{2\varepsilon})}^2&\le Cs\|e^{s\varphi_0}\nabla v(\cdot,0)\|_{L^2(\Omega_{\varepsilon_*};\mathbb{R}^d)}^2+Cs\|e^{s\varphi_0}v(\cdot,0)\|_{L^2(\Omega_{\varepsilon_*};\mathbb{R}^d)}^2\\
&\quad+C\int_{Q} e^{2s\varphi}|\partial_tRF|^2dxdt+\frac{C}{s^2}\int_{Q} e^{2s\varphi}|RF|^2dxdt\\
&\quad+C\int_{Q}e^{2s\varphi}|P\chi|^2|\partial_tv|^2dxdt+\frac{C}{s^2}\int_{Q}|P\chi|^2|v|^2dxdt\\
&\quad+Cs\int_{\Gamma\times(0,T)} e^{2s\varphi}|\partial_tv|^2dSdt+\frac{C}{s}\int_{\Gamma\times(0,T)} e^{2s\varphi}|v|^2dS dt\end{align*}
and choosing sufficiently large $s>s_*$ yields
\begin{align*}&s\|e^{s\varphi_0}F\|_{L^2(\Omega_{2\varepsilon};\mathbb{R}^{d+1})}^2\\
&\le C\int_{Q} e^{2s\varphi}\Big(\sum_{k=0}^1|\partial_t^kRF|^2\Big)dxdt+Cse^{Cs}\|v(\cdot,0)\|_{H^1(\Omega_{\varepsilon_*};\mathbb{R}^{d+1})}^2\\
&\quad+Ce^{4\varepsilon s}\|v\|_{H^1(0,T;L^2(\Omega;\mathbb{R}^{d+1}))}^2+Cs e^{C s}\|v\|_{H^1(0,T;L^2(\Gamma;\mathbb{R}^{d+1}))}^2\\
&=C\int_0^T\int_{\Omega_{2\varepsilon}} e^{2s\varphi}\Big(\sum_{k=0}^1|\partial_t^kR|^2\Big)|F|^2dxdt+C\int_0^{T}\int_{\Omega\setminus \Omega_{2\varepsilon}} e^{2s\varphi}\Big(\sum_{k=0}^1|\partial_t^kR|^2\Big)|F|^2dxdt\\
&\quad+Ce^{4\varepsilon s}\|v\|_{H^1(0,T;L^2(\Omega;\mathbb{R}^{d+1}))}^2+Cse^{Cs}\Big(\|v(\cdot,0)\|_{H^1(\Omega_{\varepsilon_*};\mathbb{R}^{d+1})}^2+\|v\|_{H^1(0,T;L^2(\Gamma;\mathbb{R}^{d+1}))}^2\Big)\\
&\le C\int_{\Omega_{2\varepsilon}}\left(\int_0^{T}e^{-2s(\varphi_0(x)-\varphi(x,t))}\Big(\sum_{k=0}^1\|\partial_t^kR(\cdot,t)\|_{L^\infty(\Omega)}^2\Big)dt\right)e^{2s\varphi_0}|F|^2dx\\
&\quad+Ce^{4\varepsilon s}\int_{Q}\Big(\sum_{k=0}^1\|\partial_t^kR(\cdot,t)\|_{L^\infty(\Omega)}^2\Big)|F|^2dxdt+Ce^{4\varepsilon s}\|v\|_{H^1(0,T;L^2(\Omega;\mathbb{R}^{d+1}))}^2\\
&\quad+Cse^{Cs}\Big(\|v(\cdot,0)\|_{H^1(\Omega_{\varepsilon_*};\mathbb{R}^{d+1})}^2+\|v\|_{H^1(0,T;L^2(\Gamma;\mathbb{R}^{d+1}))}^2\Big)\\
&\le o(1)\|e^{s\varphi_0}F\|_{L^2(\Omega_{2\varepsilon};\mathbb{R}^{d+1})}^2+Ce^{4\varepsilon s}\Big(\|F\|_{L^2(\Omega;\mathbb{R}^{d+1})}^2+\|v\|_{H^1(0,T;L^2(\Omega;\mathbb{R}^{d+1}))}^2\Big)\\
&\quad+Cse^{Cs}\Big(\|v(\cdot,0)\|_{H^1(\Omega_{\varepsilon_*};\mathbb{R}^{d+1})}^2+\|v\|_{H^1(0,T;L^2(\Gamma;\mathbb{R}^{d+1}))}^2\Big)\end{align*}
as $s\to+\infty$ by the Lebesgue dominated convergence theorem. Choosing $s>s_*$ large enough yields
\begin{align*}\|e^{s\varphi_0}F\|_{L^2(\Omega_{2\varepsilon};\mathbb{R}^{d+1})}^2&\le Ce^{4\varepsilon s}\Big(\|F\|_{L^2(\Omega;\mathbb{R}^{d+1})}^2+\|v\|_{H^1(0,T;L^2(\Omega;\mathbb{R}^{d+1}))}^2\Big)\\
&\quad+Ce^{Cs}\Big(\|v(\cdot,0)\|_{H^1(\Omega_{\varepsilon_*};\mathbb{R}^{d+1})}^2+\|v\|_{H^1(0,T;L^2(\Gamma;\mathbb{R}^{d+1}))}^2\Big)\\
&\le C\Big(e^{2\varepsilon s}\mathfrak{F}+e^{Cs}\mathfrak{D}\Big)^2.\end{align*}
By the same argument as the proof of Theorem \ref{ISP}, there exist constants $C>0$ and $\theta\in(0,1)$ such that
\[\|F\|_{L^2(\Omega_{3\varepsilon};\mathbb{R}^{d+1})}\leq C\Big(\mathfrak{D}+\mathfrak{F}^{1-\theta}\mathfrak{D}^\theta\Big).\]
\end{proof}

\section*{Acknowledgment}
This work was supported in part by Grant-in-Aid for JSPS Fellows Grant Number JP20J11497, and Istituto Nazionale di Alta Matematica (IN$\delta$AM), through the GNAMPA Research Project 2020, titled ``Problemi inversi e di controllo per equazioni di evoluzione e loro applicazioni'', coordinated by the first author.

\bibliographystyle{plain}
\bibliography{reference}

\end{document}